\renewcommand{\AA}{\mathbb{A}}
\newcommand{\CC}{\mathbb{C}}
\newcommand{\HH}{\mathbb{H}}
\newcommand{\PP}{\mathbb{P}}
\newcommand{\QQ}{\mathbb{Q}}
\newcommand{\ZZ}{\mathbb{Z}}
\newcommand{\sD}{\mathscr{D}}
\newcommand{\sG}{\mathscr{G}}
\newcommand{\sK}{\mathscr{K}}
\newcommand{\sM}{\mathscr{M}}
\newcommand{\sO}{\mathscr{O}}
\newcommand{\de}{\mathrm{d}}
\newcommand{\dR}{\mathrm{dR}}
\newcommand{\Hig}{\mathrm{Hig}}
\newcommand{\coH}{\mathrm{H}}
\newcommand{\dRH}{\diamond}
\newcommand{\ddRH}{\Theta}
\newcommand{\cpt}{compactification}
\newcommand{\flr}[1]{\lfloor #1 \rfloor}
\newcommand{\Gr}{\operatorname{Gr}}
\newcommand{\ord}{\operatorname{ord}}
\newcommand{\gen}[1]{\left\langle #1 \right\rangle}
\newcommand{\RD}{\mathrm{R}}
\newcommand{\red}[1]{{#1}_{\mathrm{red}}}
\newcommand{\HN}{\mathrm{HN}}
\newcommand{\rest}[1]{_{|{#1}}}
\begin{document}
\allowdisplaybreaks

\newcommand{\arXivNumber}{1608.08700}

\renewcommand{\thefootnote}{}

\renewcommand{\PaperNumber}{055}

\FirstPageHeading

\ShortArticleName{The K\"unneth Formula for the Twisted de Rham and Higgs Cohomologies}

\ArticleName{The K\"unneth Formula for the Twisted de Rham\\ and Higgs Cohomologies\footnote{This paper is a~contribution to the Special Issue on Modular Forms and String Theory in honor of Noriko Yui. The full collection is available at \href{http://www.emis.de/journals/SIGMA/modular-forms.html}{http://www.emis.de/journals/SIGMA/modular-forms.html}}}

\Author{Kai-Chieh CHEN~$^\dag$ and Jeng-Daw YU~$^\ddag$}

\AuthorNameForHeading{K.-C.~Chen and J.-D.~Yu}

\Address{$^\dag$~Department of Mathematics, University of California, Berkeley, Berkeley, CA, USA}
\EmailD{\href{mailto:kaichiehchen@berkeley.edu}{kaichiehchen@berkeley.edu}}
\URLaddressD{\url{https://math.berkeley.edu/~kaichieh/}}

\Address{$^\ddag$~Department of Mathematics, National Taiwan University, Taipei, Taiwan}
\EmailD{\href{mailto:jdyu@ntu.edu.tw}{jdyu@ntu.edu.tw}}
\URLaddressD{\url{http://homepage.ntu.edu.tw/~jdyu/}}

\ArticleDates{Received February 23, 2018, in final form June 02, 2018; Published online June 12, 2018}

\Abstract{We prove the K\"unneth formula for the irregular Hodge filtrations on the exponentially twisted de Rham and the Higgs cohomologies of smooth quasi-projective complex varieties. The method involves a careful comparison of the underlying chain complexes under a certain elimination of indeterminacy.}

\Keywords{de Rham complex; Hodge filtration; K\"unneth formula}

\Classification{14F40; 18F20; 14C30}

\renewcommand{\thefootnote}{\arabic{footnote}}
\setcounter{footnote}{0}

\section{The main result}

Let $U$ be a smooth quasi-projective variety over the field $\CC$ of complex numbers, and $f\in\Gamma(U,\sO_U)$ a regular function. Attached to such a pair $(U,f)$ and a non-negative integer $k$, one has the
\textit{$k$-th de Rham cohomology $\coH_\dR^k(U,f)$} and \textit{Higgs cohomology $\coH_\Hig^k(U,f)$}, defined in Section~\ref{sect:Fil}. The two spaces $\coH_\dRH^k(U,f), \dRH\in\{\dR, \Hig\}$, are of the same finite dimension over $\CC$ (see \cite[Remark~1.3.3]{ESY}), and are equipped with the decreasing \textit{irregular Hodge filtrations}
\begin{gather*} F^\lambda \coH_\dRH^k(U,f), \qquad \lambda \in \QQ, \end{gather*}
indexed by the ordered set $\QQ$ of rational numbers with finitely many jumps. In the following, we omit the adjective and just call them the \textit{Hodge filtrations}. For the motivations and the basic properties of the Hodge filtration, including in particular the degeneration of the Hodge to de Rham spectral sequence, see \cite{ESY, KKP} and the references therein. We recall the construction in Section~\ref{sect:Fil}.

Now consider two such pairs $(U_i,f_i)$, $i=1,2$. On the product $U:=U_1\times U_2$, consider the regular function $f$ defined by $f\colon (x_1,x_2) \mapsto f_1(x_1) + f_2(x_2)$. We call the pair $(U,f)$ the \textit{product} of the two $(U_i,f_i)$. For $\dRH \in \{\dR, \Hig\}$, there is the canonical map
\begin{gather}\label{eq:H-prod}
\bigoplus_{i+j=k} \coH_\dRH^i(U_1,f_1) \otimes \coH_\dRH^j(U_2,f_2) \to \coH^k_\dRH(U,f)
\end{gather}
induced by cup product. We equip the space on the left hand side with the product filtration, i.e., the $\lambda$-th filtration for $\lambda\in\QQ$ is given by the subspace
\begin{gather*} \bigoplus_{i+j=k}\left(\sum_{a +b = \lambda} F^a\coH^i_\dRH(U_1,f_1)\otimes F^b\coH^j_\dRH(U_2,f_2) \right), \end{gather*}
where the inner sum is taken inside $\coH_\dRH^i(U_1,f_1)\otimes \coH_\dRH^j(U_2,f_2)$. In this article, we prove the following K\"unneth formula.

\begin{theorem}\label{Thm:Kunneth}
With notations as above, the map \eqref{eq:H-prod} is an isomorphism of filtered spaces.
\end{theorem}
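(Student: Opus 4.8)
The plan is to lift Theorem~\ref{Thm:Kunneth} to a statement about filtered chain complexes on a suitable compactification of the product, and then to deduce the cohomological assertion from a filtered K\"unneth theorem over $\CC$. First I would fix, for each pair $(U_i,f_i)$, a good compactification $X_i\supset U_i$ with $D_i=X_i\setminus U_i$ a normal crossing divisor on which $f_i$ extends to $\bar f_i\colon X_i\to\PP^1$. As recalled in Section~\ref{sect:Fil}, $\coH^\bullet_\dRH(U_i,f_i)$ together with its Hodge filtration is computed as the hypercohomology of an explicit filtered complex $(\sK_i^\bullet,F)$ on $X_i$, the case $\dRH=\Hig$ being the associated graded $\Gr_F$ of the case $\dRH=\dR$. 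Two inputs from \cite{ESY,KKP} are used as black boxes: the Hodge filtration is independent of the chosen good compactification, and the Hodge-to-de Rham spectral sequence degenerates at $E_1$, i.e., each $F$ is strict. Strictness, combined with ordinary K\"unneth over the field $\CC$, identifies the product filtration on the left-hand side of \eqref{eq:H-prod} with the filtration carried by the hypercohomology of the external tensor product $\mathrm{pr}_1^*\sK_1^\bullet\otimes\mathrm{pr}_2^*\sK_2^\bullet$ on $X_1\times X_2$, where $\mathrm{pr}_i$ are the projections and the tensor filtration is by construction the product filtration.

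The obstruction to reading off the right-hand side from $X_1\times X_2$ is that $f=f_1+f_2$ does \emph{not} extend to a morphism there: addition $\PP^1\times\PP^1\dashrightarrow\PP^1$ is undefined at $(\infty,\infty)$, hence $f$ is indeterminate along the locus where both $\bar f_1$ and $\bar f_2$ take the value $\infty$, so $X_1\times X_2$ is not a good compactification of $(U,f)$. I would therefore eliminate this indeterminacy by a sequence of blow-ups $\pi\colon X\to X_1\times X_2$ supported over the boundary, hence an isomorphism over $U=U_1\times U_2$, after which $f$ extends to $\bar f\colon X\to\PP^1$ and $X$ becomes a good compactification of $(U,f)$. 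Consequently the hypercohomology of the filtered complex $(\sK^\bullet,F)$ attached to $\bar f$ on $X$ computes $\coH^k_\dRH(U,f)$ with its Hodge filtration, i.e., the right-hand side of \eqref{eq:H-prod}.

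The heart of the argument, and the step I expect to be the main obstacle, is a filtered comparison across $\pi$: the claim is that $R\pi_*(\sK^\bullet,F)$ is filtered quasi-isomorphic to $\bigl(\mathrm{pr}_1^*\sK_1^\bullet\otimes\mathrm{pr}_2^*\sK_2^\bullet,\,F\bigr)$ on $X_1\times X_2$, i.e., an isomorphism on every $\Gr_F^\lambda$. Over $U$ this is immediate, since $\pi$ is an isomorphism there and $f=f_1+f_2$; all the content lies along the exceptional divisors and the boundary, where the filtration $F$ is governed by the pole orders of $\bar f$ and hence by the floor functions $\flr{\cdot}$ of the coefficients of the relevant divisors. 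The hard part will be the local computation on the charts created by the elimination of indeterminacy: one must verify that the floor of the pole-order divisor of $\bar f$ matches the divisor produced by the external product of the pole-order data of $\bar f_1$ and $\bar f_2$, so that $F$ pushes forward exactly to the product filtration. I anticipate genuine subtlety at the new multiple crossings, where the additivity $f=f_1+f_2$ interacts with the rounding in $\flr{\cdot}$, together with the need to control the higher direct images $R^{>0}\pi_*$ (via the projection formula and a coherent vanishing on the blow-up charts) so that they do not disturb the filtered quasi-isomorphism.

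Granting this key lemma, the conclusion is formal. A filtered quasi-isomorphism induces an isomorphism of the associated spectral sequences from $E_1$ onward, hence a filtered isomorphism on hypercohomology once the filtrations are exhaustive with finitely many jumps, as here. Passing to $\Gr_F$ yields precisely the $\dRH=\Hig$ case: $\Gr_F$ commutes with the external tensor product, so the graded complexes are the Higgs complexes of the factors, for which unfiltered K\"unneth over $\CC$ holds. Passing to the filtered total complex and composing with the identification of the left-hand filtration from the first paragraph (where the $E_1$-degeneration of each factor is used) yields the $\dRH=\dR$ case, showing that \eqref{eq:H-prod} carries the product filtration isomorphically onto the Hodge filtration. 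This establishes Theorem~\ref{Thm:Kunneth} in both cases.
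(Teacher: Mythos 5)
Your overall strategy coincides with the paper's: eliminate the indeterminacy of $f=f_1+f_2$ by explicit blowups $\pi\colon X\to X_1\times X_2$ over the boundary, and prove a filtered comparison between the complex computing the Hodge filtration upstairs and the externally filtered product complex downstairs. But the step you defer as the ``key lemma'' is not a verification one can postpone --- it is the entire content of the theorem --- and your proposal supplies neither of the two devices that make it provable. First, the product filtration downstairs, $F_\boxtimes^\lambda=\sum_{a+b=\lambda}F_{X_1}^a\boxtimes F_{X_2}^b$, is a sum over all rational splittings of $\lambda$; it is \emph{not} the floor of any single pole divisor, so ``matching the floor of the pole-order divisor of $\bar f$ with the divisor produced by the external product'' is not even the right shape of statement. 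The paper's solution is to introduce, on \emph{every} intermediate blowup $Y$, the coherent (generally non-locally-free) sheaf $\sO_Y^{(\lambda)}=\sum_{0\leq\theta\leq1}\sO_Y\big(\flr{\lambda((1-\theta)P_{Y,f_1}+\theta P_{Y,f_2})}\big)$ and the resulting complex $F_Y^{(\lambda)}(\ddRH)$: this interpolating object coincides with $F_\boxtimes^\lambda$ when $Y=X_1\times X_2$, coincides with the Hodge-filtration complex at the terminal compactification, and is what one can actually track through each blowup $\varpi\colon\widetilde Y\to Y$ --- the quotient $F_{\widetilde Y}^{(\lambda)}/\varpi^*F_Y^{(\lambda)}$ is computed to be a direct sum of relative $\sO(-1)$'s on the exceptional $\PP^1$-bundle, and $F_Y^{(\lambda)}\simeq\RD\varpi_*F_{\widetilde Y}^{(\lambda)}$ is then proved by filtering $\sO_Y^{(\mu)}$ by its finitely many generating monomials and applying the projection formula to each piece. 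Without this mechanism (or an equivalent one), your anticipated ``floor bookkeeping'' and $R^{>0}\pi_*$ control remain a wish list. You also need, separately, the identification of $\HH^k\big(X_1\times X_2,F_\boxtimes^\lambda\big)$ with the product filtration inside $\coH^i(U_1,\ddRH_1)\otimes\coH^j(U_2,\ddRH_2)$; this uses the coherent K\"unneth formula together with a decreasing induction on $\lambda$ driven by $E_1$-degeneration on each factor, which your one-sentence appeal to ``strictness plus ordinary K\"unneth'' compresses too far, though it is the right idea.

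There is also a structural discrepancy: you insist that the elimination terminate at a \emph{good} compactification of $(U,f)$. The natural endpoint of the explicit blowup procedure (blowing up where the invariant $\Delta_Y$ is negative) is only a \emph{non-degenerate} compactification: the process stops once the two pole-order vectors become componentwise comparable, at which point the zero divisor of $f$ is smooth near the boundary and crosses it normally, but still meets the pole locus, so $f$ is not yet a morphism to $\PP^1$. Pushing on to a good compactification requires further blowups along intersections of the zero and pole divisors, and the filtered comparison across those is a separate nontrivial argument --- exactly what the paper's Section~\ref{sect:Fil} does, via the Kontsevich sheaves $\Omega_f^p(\alpha)$, to show that non-degenerate compactifications compute the same Hodge filtration. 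Your black boxes from \cite{ESY,KKP} concern good compactifications only, so they cover neither the non-degenerate endpoint nor the comparison with $X_1\times X_2$, which is not a compactification of $(U,f)$ of either kind. (A minor imprecision: the Higgs complex is not the associated graded of the filtered de Rham complex; rather, the two filtered complexes have the same associated graded, which is what the degeneration argument actually uses.)
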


In particular, denoting $\Gr_F^\lambda V$ the $\lambda$-th graded piece of a filtered space $(V,F)$, one has the identification
\begin{gather*} \Gr_F^\lambda\coH_\dRH^k(U,f) = \bigoplus_{r,s} \Gr_F^r\coH_\dRH^s(U_1,f_1) \otimes \Gr_F^{\lambda-r}\coH_\dRH^{k-s}(U_2,f_2). \end{gather*}

Recently different proofs of the K\"unneth formula in a more general setting appear in \cite[Theorem~3.39]{S2} where the involved coefficients in the cohomology are allowed to be exponential twists of complex Hodge modules. One of the main ideas in that work is to enrich the de Rham and Higgs cohomologies into the Brieskorn lattice (see the last section Section~\ref{sect:BL}) or a~twistor structure and treat the irregular Hodge filtration as a byproduct of the enrichment. The notion of a $V$-adapted trivializing lattice for a meromorphic connection on $\PP^1$ of special type is introduced \cite[Section~3.2.b]{S2} in order to obtain the K\"unneth formula for irregular Hodge filtrations. The methods depend on the deep theory of twistor $\sD$-modules mainly developed by Sabbah and Mochizuki (see~\cite{M_twistor}). On the other hand, our approach is much more elementary. We believe that the concrete filtered de Rham complex used here would be suitable for computations in some interesting examples of irregular Hodge structures in the future work.

The rest of the article begins in Section~\ref{sect:Fil} with a brief of the construction of the Hodge filtration. We follow the approach of~\cite{Y} by putting a filtration on the de Rham complex or the Higgs complex via a certain {\cpt} of the pair $(U,f)$. Here we introduce the notion of a non-degenerate {\cpt}, which is weaker than that of a good {\cpt} used in~\cite{Y}, but appears naturally in the later section (see also \cite[Section~4]{S}, \cite[Section~7.3]{SY}, \cite{M_GKZ}). In order to compare the cohomologies with the filtrations of the summands $(U_i,f_i)$ and of their product $(U,f)$, we construct a particular {\cpt} of $(U,f)$
from the fixed ones of $(U_i,f_i)$ in Section~\ref{sect:Resolution}. The proof of the K\"unneth formula is obtained by a careful investigation of the relations between the involved complexes on the {\cpt}s. In the last Section~\ref{sect:BL}, we remark that one can interpolate the two spaces $\coH_\dR^k(U,f)$ and $\coH_\Hig^k(U,f)$ as the fibers of the \textit{Kontsevich bundle} on the projective line $\PP^1$ over $1$ and $0$, respectively.
In fact, the fiber over $c\in \CC\setminus \{0\}$ of the bundle is equal to $\coH_\dR^k(U,f/c)$ and hence the K\"unneth formula also holds true. However at $\infty$ the situation is more complicated in this regard and the direct analogue of the K\"unneth formula does not hold in general.

\section{The filtrations}\label{sect:Fil}

In this section, we fix a pair $(U,f)$ consisting of a smooth quasi-projective variety $U$ over $\CC$ and a regular function $f\in\Gamma(U,\sO_U)$.

\subsection{The \cpt}
Let $X$ be a projective variety over $\CC$ containing $U$ such that the reduced subvariety $S := X\setminus U$ is a normal crossing divisor. Regard $f\in\Gamma(X,\sO_X(*S))$ as a rational function on $X$. Let~$P$ and~$Z$ be the pole divisor and the zero divisor of $f$ on $X$, respectively, and let $\red{P}$ be the support of~$P$.

\begin{definition*}\quad\begin{enumerate}\itemsep=0pt
\item[(i)] We call $X$ a \textit{non-degenerate {\cpt}} of $(U,f)$ if there exists a neighborhood $V \subset X$ of $P$ such that $Z\cap V$ is smooth and $Z+S$ forms a reduced normal crossing divisor on $V$.

\item[(ii)] We call $X$ a \textit{good {\cpt}} of $(U,f)$ if $f$ indeed defines a morphism $f\colon X \to \PP^1$.
\end{enumerate}
\end{definition*}

If $X$ is non-degenerate, analytically locally at a point of $P$, there exists a coordinate system
\begin{gather*} \{x_1,\dots,x_l, y_1,\dots,y_m,z_1,\dots,z_r\} \end{gather*}
such that
\begin{gather}\label{eq:non-deg-local}
S = (xy) \qquad\text{and}\qquad f= \frac{1}{x^e} \qquad\text{or}\qquad f = \frac{z_1}{x^e}
\end{gather}
for some $e \in \ZZ_{>0}^l$. (Here and afterwards, we use the standard multi-index convention.) If $X$ is good, the second case of $f$ in the above expression does not occur.

For example, consider the case where $U$ is a complex torus and $f$ a Laurent polynomial. Assume that $f$ is non-degenerate with respect to its Newton polytope (for the definition, see \cite[Section~4]{Y}). Then any toric smooth {\cpt} compatible with the Newton polytope is a non-degenerate {\cpt} of~$(U,f)$ by \cite[Proposition~4.3]{Y} or \cite[Lemma~6.6]{M_GKZ}. The non-degenerate {\cpt} also appears in the considerations of rescaling
from a good {\cpt} \cite[Section~7.3]{SY}, and of Fourier transform \cite[Section~4]{S}. It is discussed in~\cite{M_GKZ} where in this situation, the author calls the meromorphic function $f$ on $X$ \textit{non-degenerate along~$S$} \cite[Definition~2.6]{M_GKZ}. In this case, the author investigates the structures of the twistor $\sD$-module associated with the meromorphic connection $(\sO_X(*S), \de +\de f)$; it is shown in particular that if $S$ equals the pole divisor of~$f$, the resulting twistor $\sD$-module is pure \cite[Lemma~2.10 and Corollary~3.12]{M_GKZ}.

\subsection{The filtered complexes}
Fix a non-degenerate {\cpt} $X$ of $(U,f)$ with boundary $S$. Regard $f$ as a rational function on $X$ and let $P = f^*(\infty)$ be the pole divisor with multiplicities. We have $\de f \in \Gamma\big(X,\Omega_X^1(\log S)(P)\big)$.

\begin{definition*}\quad\begin{enumerate}\itemsep=0pt
\item[(i)] The \textit{twisted de Rham complex} and the \textit{Higgs complex} are the complexes
\begin{gather*}
(\Omega_X^\bullet(\log S)(*P), \ddRH) = \big[ \sO_X(*P) \!\xrightarrow{\ddRH} \!\cdots\! \xrightarrow{\ddRH} \! \Omega_X^i(\log S)(*P) \!\xrightarrow{\ddRH}\! \Omega_X^{i+1}(\log S)(*P)\xrightarrow{\ddRH}\!\cdots \big],
\end{gather*}
where $\ddRH = \de + \de f$ and $\de f$, respectively. Here $\de f$ sends a local section $\omega$ to $\de f\wedge \omega$.

\item[(ii)] We call the associated hypercohomology groups $\HH^k(X,(\Omega_X^\bullet(\log S)(*P), \ddRH))$ the \textit{de Rham cohomology} and the \textit{Higgs cohomology of $(U,f)$}, and denote respectively by
\begin{gather*} \coH_\dR^k(U,f) = \coH^k(U,\de +\de f) \qquad\text{and}\qquad \coH_\Hig^k(U,f) = \coH^k(U,\de f). \end{gather*}

\item[(iii)] For an effective divisor $D$ on $X$ and $\mu\in\QQ$, let
\begin{gather*} \Omega_X^i(\log S)(\flr{\mu D})_+ = \begin{cases}
\Omega_X^i(\log S)(\flr{\mu D}), & \text{if $\mu\geq 0$}, \\
0, & \text{otherwise}.
\end{cases} \end{gather*}
For $\ddRH \in \{\de +\de f, \de f\}, \lambda \in \QQ$, let
\begin{gather*} F_X^\lambda(\ddRH) = F^\lambda(\ddRH) =
\big[ \sO_X(\flr{-\lambda P})_+ \xrightarrow{\ddRH} \cdots \xrightarrow{\ddRH} \Omega_X^i(\log S)(\flr{(i-\lambda)P})_+ \xrightarrow{\ddRH} \cdots \big]. \end{gather*}
The subindex $X$ in $F_X^\lambda(\ddRH)$ will be omitted if the base variety is clear. The \textit{Hodge filtrations} of the de Rham and the Higgs cohomologies are \begin{gather}\label{eq:H-fil}
F^\lambda\coH^k(U,\ddRH) = \mathrm{Image}\big\{ \HH^k(X,F^\lambda(\ddRH)) \to \coH^k(U,\ddRH) \big\}
\end{gather}
induced by the inclusions of complexes.

\item[(iv)] For $\alpha\in\QQ$, the \textit{Kontsevich sheaf of differential $p$-forms} is the coherent subsheaf of $\Omega_X^p(\log S)(*P)$
\begin{gather*} \Omega_f^p(\alpha) = \ker\big\{ \ddRH\colon \Omega_X^p(\log S)(\flr{\alpha P}) \to \Omega_X^{p+1}(*S)/\Omega_X^{p+1}(\log S)(\flr{\alpha P}) \big\}, \end{gather*}
and it forms the \textit{Kontsevich complex} $(\Omega_f^\bullet(\alpha), \ddRH)$ equipped with the filtration $(\Omega_f^\bullet(\alpha), \ddRH)_{\bullet\geq p}$ by direct truncation. We simply write $\Omega_f^p$ for $\Omega_f^p(0)$.
\end{enumerate}
\end{definition*}

\begin{proposition}\label{Prop:Kont-Log}\quad
\begin{enumerate}\itemsep=0pt
\item For $\alpha\in\QQ$ and $p\in\ZZ$, the $\sO_X$-module $\Omega_f^p(\alpha)$ is locally free of rank $\binom{\dim X}{p}$ with
\begin{gather*} \Omega_f^p(\alpha) = \Omega_f^p \otimes_{\sO_X}\sO_X(\flr{\alpha P}). \end{gather*}
\item For $\ddRH\in\{\de +\de f, \de f\}$ the three inclusions
\begin{gather*}
F^0(\ddRH) \to F^{-\alpha}(\ddRH), \qquad 0\leq \alpha, \\
(\Omega_f^\bullet(\alpha),\ddRH) \to F^{-\alpha}(\ddRH), \qquad 0\leq \alpha, \\
(\Omega_f^\bullet(\alpha),\ddRH)_{\bullet\geq p} \to F^{-\alpha +p}(\ddRH), \qquad 0\leq \alpha <1
\end{gather*}
are quasi-isomorphisms.
\end{enumerate}
\end{proposition}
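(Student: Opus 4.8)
The plan is to reduce both parts to local computations near $\red{P}$ via the non-degenerate coordinates \eqref{eq:non-deg-local}, since local freeness and the quasi-isomorphism property (the latter detected on cohomology sheaves) can be checked stalkwise. Away from $\red{P}$ the function $f$ is regular, all twisting divisors $\flr{\mu P}$ vanish, and every sheaf degenerates: $\Omega_f^p(\alpha)=\Omega_X^p(\log S)$, each $F^\lambda(\ddRH)$ has $i$-th term $\Omega_X^i(\log S)$, and all the inclusions are identities, so there is nothing to prove there. Near a point of $\red{P}$ I would write $\de f=-x^{-e}\theta$ with $\theta=\sum_i e_i\,\de x_i/x_i$ in the first case of \eqref{eq:non-deg-local}, and $\de f=x^{-e}\eta$ with $\eta=\de z_1-z_1\theta$ in the second. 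The crucial consequence of non-degeneracy (some $e_i\neq 0$, resp.\ the unit coefficient of $\de z_1$) is that $\theta$ (resp.\ $\eta$) is part of an $\sO_X$-frame of $\Omega_X^1(\log S)$; writing $R$ for the complementary set of $n-1$ frame vectors ($n=\dim X$) and $\Lambda^\bullet(R)$ for the exterior subalgebra they generate, we get $\Omega_X^p(\log S)=\theta\wedge\Lambda^{p-1}(R)\oplus\Lambda^p(R)$, and likewise with $\eta$.

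For part~1, I first note that the de Rham and Higgs Kontsevich sheaves coincide: since $\de$ preserves $\Omega_X^{p+1}(\log S)(\flr{\alpha P})$, the class of $\ddRH\omega$ modulo $\Omega_X^{p+1}(\log S)(\flr{\alpha P})$ equals that of $\de f\wedge\omega$ for both choices of $\ddRH$. Writing a section of $\Omega_X^p(\log S)(\flr{\alpha P})=x^{-\flr{\alpha e}}\Lambda^p$ as $x^{-\flr{\alpha e}}\rho$, the defining condition becomes $\theta\wedge\rho\in x^e\Lambda^{p+1}$, independent of $\flr{\alpha e}$; this immediately yields $\Omega_f^p(\alpha)=\Omega_f^p\otimes_{\sO_X}\sO_X(\flr{\alpha P})$ and reduces local freeness to $\alpha=0$. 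Decomposing $\rho=\theta\wedge\beta+\gamma$ along the frame, $\theta\wedge\rho=\theta\wedge\gamma$ lies in $x^e\Lambda^{p+1}$ exactly when every coefficient of $\gamma$ lies in $x^e\sO_X$, whence $\Omega_f^p=\theta\wedge\Lambda^{p-1}(R)\oplus x^e\Lambda^p(R)$, a direct sum of free modules of rank $\binom{n-1}{p-1}+\binom{n-1}{p}=\binom np$. The second local case is identical with $\eta$ in place of $\theta$.

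For part~2 I would run the same frame decomposition on the complexes themselves, first for the Higgs differential $\ddRH=\de f$. In these coordinates each term of $F^\lambda(\de f)$ and of the Kontsevich complex becomes $\Lambda^{i-1}(R)\oplus\Lambda^i(R)$ up to an explicit power of $x$, and a short computation shows that on all of them the induced differential is the single contraction $(\beta,\gamma)\mapsto(-\gamma,0)$; hence $F^0(\de f)$, $F^{-\alpha}(\de f)$ and $(\Omega_f^\bullet(\alpha),\de f)$ are all acyclic near $\red{P}$, giving the first two quasi-isomorphisms (both sides have vanishing cohomology sheaves there and the maps are identities away from $\red{P}$). For the third, the direct truncation at $p$ together with the vanishing of $F^{-\alpha+p}$ in degrees $<p$—this is where $0\le\alpha<1$ is used—leaves a single surviving cohomology sheaf $\mathcal H^p=x^{-\flr{\alpha e}}\,\theta\wedge\Lambda^{p-1}(R)$ on both sides, onto which the inclusion restricts to the identity, so it too is a quasi-isomorphism.

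Finally I would deduce the de Rham statements ($\ddRH=\de+\de f$) from the Higgs ones by filtering each complex by the pole order along $\red{P}$ relative to the bound allowed in each degree: because $\de$ strictly lowers that relative pole order while $\de f$ preserves it, the associated graded differential is exactly $\de f$, and the filtration is finite. The resulting spectral sequence, convergent since the filtration is bounded in each degree, transports the acyclicity (for the first two maps) and the concentration of cohomology in degree $p$ (for the third) from the Higgs to the de Rham complexes, and shows the inclusions remain quasi-isomorphisms. I expect the main obstacle to be the bookkeeping in the local cohomology computation—pinning down the induced differential as a single contraction uniformly across $F^\lambda$, the Kontsevich complex and its truncation, and checking that the inclusion induces the identity on the surviving cohomology sheaf in the truncated case; the non-degeneracy hypothesis enters precisely in guaranteeing that $\theta$ (resp.\ $\eta$) completes to a frame, which is what makes every one of these complexes a Koszul-type contraction.
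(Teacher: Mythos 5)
Your part~(1) and the Higgs half of part~(2) are correct and close to the paper's own computation: in the coordinates \eqref{eq:non-deg-local} the paper exhibits exactly your frame decomposition as the basis \eqref{eq:Kont-loc-gen} of $\Omega_f^p(\alpha)$, and your Koszul--contraction argument showing that $F^0(\de f)$, $F^{-\alpha}(\de f)$, $(\Omega_f^\bullet(\alpha),\de f)$ are exact at points of $\red{P}$ (resp.\ have cohomology only in degree $p$ in the truncated case) is sound. The genuine gap is the final step, where you transport these conclusions from $\de f$ to $\de+\de f$ by a spectral sequence. The filtration you describe --- by pole order relative to the allowed bound, so that $\de f$ preserves the filtration index while $\de$ raises it --- is \emph{not} finite and not bounded in any degree: on $F^{-\alpha}(\de+\de f)$ its $k$-th piece in degree $i$ is $x^{-\flr{(i+\alpha)e}+ke}\Lambda^i$, and these decrease strictly for every $k\geq 0$. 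The filtration is separated and exhaustive but infinite, so the associated spectral sequence need not converge to the cohomology of the complex, and here it genuinely does not.

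Indeed, the statement you are trying to transport is false: the de Rham complexes $F^\lambda(\de+\de f)$ are \emph{not} acyclic near $\red{P}$. Take the one-variable local model $f=1/t$, $S=P=(t)$ (i.e., $(U,f)=\big(\AA^1,x\big)$ with $X=\PP^1$); then $F^0(\de+\de f)$ is locally $\sO_X \to t^{-2}\sO_X\,\de t$, $h\mapsto \big(h'-t^{-2}h\big)\de t$, and hitting $t^{-2}\cdot t\,\de t$ requires solving $t^2h'-h=t$, whose unique formal solution $h=-\sum_{k\geq 1}(k-1)!\,t^k$ diverges, hence lies in no (algebraic or analytic) stalk $\sO_{X,0}$; so the cohomology sheaf $\mathcal{H}^1$ of $F^0(\de+\de f)$ is nonzero at $t=0$ --- even though every graded piece of your filtration on this complex is exact, which is precisely the classical failure of convergence for unbounded filtrations. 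The repair is to run your graded/Koszul computation not on the complexes themselves but on the \emph{quotients} of the inclusions: these are annihilated by a bounded power of the ideal of $\red{P}$, so the relative-pole-order filtration on them is finite. This is what the paper does, factoring each inclusion into the one-step inclusions \eqref{eq:extended} of the complexes $A_k^\bullet(D)$, one component of $\red{P}$ and one pole order at a time; on each quotient $\de$ drops into the subcomplex, so both differentials induce $\de f\wedge{}$, and your contraction argument shows the quotient is exact. That yields all three quasi-isomorphisms simultaneously for $\ddRH=\de+\de f$ and $\ddRH=\de f$; your direct local-acyclicity argument remains valid only as an alternative proof in the Higgs case.
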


\begin{proof} Both statements are local properties for coherent sheaves on $X$ and we can restrict to the coordinates such that \eqref{eq:non-deg-local} holds.

(i) In case $f = \frac{1}{x^e}$, the local freeness and the quasi-isomorphisms are obtained in \cite[equation~(1.3.1)]{ESY}, \cite[Lemma 2.12(a)]{KKP} and \cite[Proposition~1.3.]{Y}, \cite[Proposition~1.4.2]{ESY},
respectively. In fact, the methods are similar to the arguments below.

Consider the second case $f = \frac{z_1}{x^e}$ so that
\begin{gather*} z_1\frac{\de f}{f} = x^e\de f = \de z_1 - z_1\sum e_i\frac{\de x_i}{x_i}. \end{gather*}
In this chart,
the $\sO_X$-module $\Omega_X^p(\log S)$ is freely generated by
\begin{gather}\label{eq:with-df}
z_1\frac{\de f}{f}\wedge\bigwedge_{i=1}^{p-1}\xi_i, \qquad \{\xi_i \}_{i=1}^{p-1} \subset \left\{ \de z_2,\dots, \de z_r,\frac{\de x_1}{x_1},\dots, \frac{\de x_l}{x_l},\frac{\de y_1}{y_1},\dots, \frac{\de y_m}{y_m} \right\}
\end{gather}
and
\begin{gather}\label{eq:without-df}
\bigwedge_{i=1}^p\eta_i, \qquad \{\eta_i \}_{i=1}^p \subset \left\{ \de z_2,\dots, \de z_r, \frac{\de x_1}{x_1},\dots, \frac{\de x_l}{x_l},\frac{\de y_1}{y_1},\dots, \frac{\de y_m}{y_m} \right\}.
\end{gather}
The $\sO_X$-module $\Omega_f^p(\alpha)$ is indeed freely generated by
\begin{gather}\label{eq:Kont-loc-gen}
x^{-\flr{\alpha e}}\omega_1 \qquad\text{and}\qquad x^{e-\flr{\alpha e}}\omega_2,
\end{gather}
where $\omega_1$ and $\omega_2$ run through elements in \eqref{eq:with-df} and \eqref{eq:without-df}, respectively.

(ii) Let $D$ be a divisor supported on $\red{P}$, and $E$ an irreducible component of $\red{P}$. Let
\begin{gather*} A^p_k(D) = \Omega^p(\log S)(D +pP + kE), \qquad p,k\in\ZZ, \alpha\in\QQ. \end{gather*}
To obtain the quasi-isomorphisms, it suffices to show that the inclusion of complexes
\begin{gather}\label{eq:extended}
(A_{k-1}^\bullet(D), \ddRH) \to (A_k^\bullet(D), \ddRH)
\end{gather}
is a quasi-isomorphism for any $k,\alpha$ and all choices of $D$ (cf.~\cite[Proposition~1.2]{Y}). In fact, the first desired quasi-isomorphism then follows immediately. The last two quasi-isomorphisms can be derived by a decreasing induction (see the analogous statement \cite[Proposition~1.4.2]{ESY} in the case of good {\cpt} and its proof, which works for both $\ddRH = \de+\de f, \de f$).

To check the quasi-isomorphism \eqref{eq:extended}, we may assume that $E = (x_1)$ in the local model. Consider local sections $h\in\sO_X(D)$, and $\omega_1$ and $\omega_2$ in \eqref{eq:with-df} and \eqref{eq:without-df},
respectively. Then $\big\{h\cdot x^{-pe}x_1^{-k}\omega_i\big\}$ generates $A_k^p(D)$ and
\begin{align*}
\ddRH\colon \ & A_k^p(D)/A_{k-1}^p(D) \to A_k^{p+1}(D)/A_{k-1}^{p+1}(D), \\
& \frac{h}{x^{pe}x_1^k}\cdot \begin{cases}
\omega_1, \\ \omega_2
\end{cases}
 \mapsto \begin{cases}
0, \\
\frac{h}{x^{(p+1)e} x_1^k} (x^e\de f)\wedge\omega_2.
\end{cases}
\end{align*}
Therefore the quotient $\big(A_k^p(D)/A_{k-1}^p(D),\ddRH\big)_{p\in\ZZ}$ of \eqref{eq:extended} is exact.
\end{proof}

\begin{remark*} In a more functorial way, one can consider the $\sD$-module $\sM$ on $X$ attached to the meromorphic connection $(\sO_X(*S),\de +\de f)$ and define the \textit{irregular Hodge filtration} on $\sM$ as given in \cite[Definition~5.1]{SY}. By \cite[Lemma~9.17]{SY}, the filtered complex $F^\lambda(\de +\de f)$ defined above is quasi-isomorphic to the filtered de Rham complex associated with the filtered $\sD$-module $\sM$. When $X$ is a good {\cpt}, this is obtained in \cite[Proposition~1.7.4]{ESY}. Forgetting the filtrations, the quasi-isomorphisms of various de Rham complexes of $\sM$ are also derived in \cite[Lemmas 2.13 and 2.15]{M_GKZ} in the case where $X$ is non-degenerate, including the compactly supported counterpart.
\end{remark*}

\subsection{The independence}
\begin{proposition*}
For $\dRH \in \{\dR,\Hig\}$, the space $\coH_\dRH^k(U,f)$ with the Hodge filtration is independent of the choice of the non-degenerate {\cpt} of $(U,f)$. More precisely, if $\pi\colon X' \to X$ is a~morphism between non-degenerate {\cpt}s extending the identity on $U$, then there is a natural quasi-isomorphism
\begin{gather}\label{eq:RpiF}
\RD\pi_* F^\lambda_{X'}(\ddRH) \simeq F^\lambda_X(\ddRH), \qquad \lambda\in\QQ.
\end{gather}
\end{proposition*}

\begin{proof} The assertion for good {\cpt}s is proved in \cite[Theorem~1.7]{Y}, by comparing the degree $p$ components $F_X^\lambda(\ddRH)^p$ of $F_X^\lambda(\ddRH)$ on various $X$ for a fixed $p$ (and hence the proof works for both $\ddRH = \de +\de f$ and $\de f$).

In the following we show that by performing successively certain blowups, one can replace a~non-degenerate {\cpt} $X$ (thus in the second case of \eqref{eq:non-deg-local}) by a good one and compare the involved chain complexes (cf.~\cite[Section~4($b$)]{Y}). Let $\varpi \colon \widetilde{X} \to X$ be the blowup along the intersection $\Xi$ of $Z = (z_1)$ and the irreducible component $(x_1)$ of $P$ with multiplicity $e_1$. Let $E$ be the exceptional divisor, $\widetilde{S} = \widetilde{X}\setminus U$ and $\widetilde{P}$ the pole divisor of $\tilde{f} := \varpi^*f$. We want to establish that $\RD\varpi_*F_{\widetilde{X}}^\lambda(\ddRH)$ and $F_X^\lambda(\ddRH)$ are canonically quasi-isomorphic.

In case $e_1 >1$, it can be proved similarly as \cite[Proposition~4.4]{Y}. In more details, write $\lambda = -\alpha+p$ where $0\leq\alpha <1$ and $p\in\ZZ$. On $\widetilde{X}$ define the complex
\begin{gather*} R^\lambda(\ddRH) = \big(\Omega_{\widetilde{X}}^\bullet\big(\log \widetilde{S}\big)(\flr{(\alpha -p+\bullet )\big(\widetilde{P}+E)}\big)_+, \ddRH \big). \end{gather*}
We have
\begin{gather*} \varpi^*F_X^\lambda(\ddRH), F_{\widetilde{X}}^\lambda(\ddRH) \subset R^\lambda(\ddRH). \end{gather*}
By \cite[Proposition~4.4(i)]{Y}, each component of the complex $R^\lambda(\ddRH)/\varpi^*F_X^\lambda(\ddRH)$ is a direct sum of copies of $\sO_{E/\Xi}(-1)$. Hence the adjunction $F_X^\lambda(\ddRH) \to \RD\varpi_*R^\lambda(\ddRH)$ is a quasi-isomorphism. On the other hand, there are increasing complexes $R^\lambda_q(\ddRH)$ on $\widetilde{X}$ (those denoted by $R_\lambda(q)$ in \cite[equation~(27)]{Y}, which is a complex under either $\de +\de f$ or $\de f$) with $R^\lambda_{-1}(\ddRH) = F^\lambda_{\widetilde{X}}(\ddRH)$ and $R^\lambda_{\dim \widetilde{X} -p}(\ddRH) = R^\lambda(\ddRH)$ such that $R^\lambda_q(\ddRH)/R^\lambda_{q-1}(\ddRH)$
is quasi-isomorphic to the complex consisting of a direct sum of copies of $\sO_{E/\Xi}(-1)$ concentrated at degree $(p+q)$. In fact, in \cite[Lemma~4.5]{Y}, we further introduce the complexes $(K_{\rho,\eta,\xi}^\bullet, \de +\de f)$ and prove that the inclusion $(K_{\rho,\eta,\xi}^\bullet, \de +\de f) \subset (K_{\rho,\eta+1,\xi}^\bullet, \de +\de f)$ is a quasi-isomorphism by showing the quotient complex is exact. Now
one notices that $K_{\rho,\eta,\xi}^\bullet$ is indeed also stable under $\de f$; on the quotient $K_{\rho,\eta+1,\xi}^\bullet/K_{\rho,\eta,\xi}^\bullet$, one has the equality $\de +\de f = \de f$ of the differential maps
under the condition $e_1>1$. Therefore the proof of \cite[Proposition~4.4(ii)]{Y} describing $R^\lambda_q(\ddRH)/R^\lambda_{q-1}(\ddRH)$ goes through in both cases $\ddRH = \de +\de f$ and $\ddRH = \de f$. Hence one concludes that the inclusion $F_{\widetilde{X}}^\lambda(\ddRH) \subset R^\lambda(\ddRH)$ induces a quasi-isomorphism $\RD\varpi_*F_{\widetilde{X}}^\lambda(\ddRH) \to \RD\varpi_*R^\lambda(\ddRH)$ under the assumption $e_1>1$.

The following arguments work for any $e_1\geq 1$ and simplify those in \cite[Section~4($b$)]{Y}. (Cf., \cite[Lemma 2.12(b)]{KKP}.) By Proposition~\ref{Prop:Kont-Log}, it suffices to show that for any $0\leq\alpha <1$, $p\in\ZZ$, we have the inclusion relation $\varpi^*\Omega_f^p(\alpha) \subset \Omega_{\tilde{f}}^p(\alpha)$ on $\widetilde{X}$ and it induces an isomorphism $\Omega_f^p(\alpha) \to \RD\varpi_*\Omega_{\tilde{f}}^p(\alpha)$ on~$X$.

We first compute $\Omega_{\tilde{f}}^p(\alpha)$. Explicitly the blowup $\widetilde{X}$ is defined by the equation
\begin{gather*} \det \begin{pmatrix}
x_1 & z_1 \\
u & v
\end{pmatrix}
= 0 \end{gather*}
in $X\times\PP^1$ where $[u:v]$ is the homogeneous coordinate on $\PP^1$. On the chart $v\neq 0$ with local coordinates
\begin{gather*} \left\{ \bar{u} = \frac{u}{v},x_2,\dots, x_l, y_1, \dots, y_m,z_1, \dots, z_r \right\}, \end{gather*}
the $\sO_{\widetilde{X}}$-module $\Omega_{\tilde{f}}^p(\alpha)$ is generated by the basis
\begin{gather}\label{eq:chart-u-1}
\delta\frac{\de \tilde{f}}{\tilde{f}}\wedge\bigwedge_{i=1}^{p-1}\xi_i,
\qquad\begin{array}{@{}l}
\delta = z_1^{-\flr{\alpha(e_1-1)}}\bar{u}^{-\flr{\alpha e_1}}	x_2^{-\flr{\alpha e_2}}\cdots x_l^{-\flr{\alpha e_l}}, \\
\displaystyle \{\xi_i \}_{i=1}^{p-1} \subset \left\{ \frac{\de z_1}{z_1} + \frac{\de\bar{u}}{\bar{u}},\frac{\de x_2}{x_2},\dots, \frac{\de x_l}{x_l},\frac{\de y_1}{y_1},\dots, \frac{\de y_m}{y_m}, \de z_2,\dots, \de z_r \right\},
\end{array}
\end{gather}
and
\begin{gather}\label{eq:chart-u-2}
\delta\frac{1}{\tilde{f}}\bigwedge_{i=1}^p\eta_i,\qquad
\{\eta_i \}_{i=1}^p \subset\left\{ \frac{\de z_1}{z_1} + \frac{\de\bar{u}}{\bar{u}},\frac{\de x_2}{x_2},\dots, \frac{\de x_l}{x_l},\frac{\de y_1}{y_1},\dots, \frac{\de y_m}{y_m},\de z_2,\dots, \de z_r \right\}.
\end{gather}
On the chart $u\neq 0$ with local coordinates
\begin{gather*} \left\{
x_1,\dots, x_l, y_1, \dots, y_m, \bar{v} = \frac{v}{u},z_2, \dots, z_r \right\}, \end{gather*}
the sheaf $\Omega_{\tilde{f}}^p(\alpha)$ is generated by the basis
\begin{gather}\label{eq:chart-v-1}
\varepsilon\bar{v}\frac{\de \tilde{f}}{\tilde{f}}\wedge\bigwedge_{i=1}^{p-1}\xi_i,
\qquad\begin{array}{@{}l}
\varepsilon = x_1^{-\flr{\alpha(e_1-1)}}	x_2^{-\flr{\alpha e_2}}\cdots x_l^{-\flr{\alpha e_l}}, \\
\displaystyle\{\xi_i \}_{i=1}^{p-1} \subset \left\{ \frac{\de x_1}{x_1},\dots, \frac{\de x_l}{x_l},\frac{\de y_1}{y_1},\dots, \frac{\de y_m}{y_m},\de z_2,\dots, \de z_r \right\},
\end{array}
\end{gather}
and
\begin{gather}\label{eq:chart-v-2}
\varepsilon\frac{\bar{v}}{\tilde{f}}\bigwedge_{i=1}^p\eta_i, \qquad \{\eta_i \}_{i=1}^p \subset\left\{ \frac{\de x_1}{x_1},\dots, \frac{\de x_l}{x_l},\frac{\de y_1}{y_1},\dots, \frac{\de y_m}{y_m},\de z_2,\dots, \de z_r \right\}.
\end{gather}
On the intersection $u,v\neq 0$, one has $\frac{\de x_1}{x_1} = \frac{\de z_1}{z_1}+\frac{\de\bar{u}}{\bar{u}}$.

Using the basis \eqref{eq:Kont-loc-gen} of $\Omega_f^p(\alpha)$, a direct computation reveals that $\varpi^*\Omega_f^p(\alpha)$ is contained in~$\Omega_{\tilde{f}}^p(\alpha)$. Moreover, the $\sO_E$-module $\Omega_{\tilde{f}}^p(\alpha)/\varpi^*\Omega_f^p(\alpha)$ equals either zero if $\flr{\alpha(e_1-1)} \neq \flr{\alpha e_1}$ or otherwise $\binom{l+m+r}{p}$ copies of $\sO_{E/\Xi}(-1)$ generated by \eqref{eq:chart-u-1}, \eqref{eq:chart-u-2} and \eqref{eq:chart-v-1}, \eqref{eq:chart-v-2} on the two charts, respectively. Therefore we obtain that $F_X^\lambda(\ddRH)$ and $\RD\varpi_*F_{\widetilde{X}}^\lambda(\ddRH)$ are naturally quasi-isomorphic.

One then iteratively takes the blowups along the intersections of irreducible components of the zero and the pole divisors as in \cite[Section~4$(b)$]{Y} (the diagram (26) therein) to obtain a~good {\cpt} $X'$ of $(U, f)$ from the non-degenerate $X$ with the canonical quasi-isomor\-phism~\eqref{eq:RpiF}.
\end{proof}

\begin{remark*}\quad\begin{enumerate}\itemsep=0pt
\item[(i)] By the $E_1$-degeneration \cite[Theorem~1.2.2]{ESY}, \cite[Theorem~1.1]{M_Kont} on a good {\cpt} (see also \cite[Theorem 2.18]{KKP}), the above proposition implies that the arrow in \eqref{eq:H-fil} is injective for any non-degenerate $X$ and indices $k,\lambda$.

\item[(ii)] For $X$ non-degenerate, we have $\coH_\dR^k(U,f) = \HH^k(X,(\Omega_X^\bullet(*S),\de + \de f))$ by the arguments of \cite[Corollary~1.4]{Y}. On the other hand, $\coH_\Hig^k(U,f) \neq \HH^k(X,(\Omega_X^\bullet(*S),\de f))$ in general which can be seen by considering the case $U$ affine and $f=0$.
\end{enumerate}
\end{remark*}

\section{The proof of the main result}\label{sect:Resolution}

We begin with two pairs $(U_i,f_i)$, $i=1,2$, and their product $(U,f)$. Fix good {\cpt}s~$X_i$ of $(U_i,f_i)$ such that $S_i := X_i\setminus U_i$ are strict normal crossing divisors. The proof of Theorem~\ref{Thm:Kunneth} consists of two steps. In the first step Section~\ref{sect:elim}, we construct explicitly a non-degenerate {\cpt} $X$ of~$(U,f)$ from $X_1\times X_2$ by successive blowups. In step two Section~\ref{sect:relations}, we compare the filtered de Rham or the Higgs complex on~$X$, which gives the Hodge filtration on~$\coH_\dRH^k(U,f)$, with a certain filtered complex on~$X_1\times X_2$ that gives the product filtration using the explicit construction of~$X$.

\subsection{An explicit elimination}\label{sect:elim}

For each $i=1,2$, take an open covering of $X_i$ with a system of local coordinates
\begin{gather*} \{ x_{i,1},\dots, x_{i,l_i}, 	x_{i,l_i+1},\dots,x_{i,l_i+m_i}, 	x_{i,l_i+m_i+1},\dots,x_{i,l_i+m_i+r_i} \} \end{gather*}
such that
\begin{gather*} f_i = \frac{1}{x_{i,1}^{e_{i,1}}\cdots x_{i,l_i}^{e_{i,l_i}}}, \quad e_{i,j} > 0, \qquad S_i = (x_{i,1}\cdots x_{i,l_i+m_i}). \end{gather*}
As for the initial data in the inductive construction, we consider the {\cpt} $X_1\times X_2$ of $U$ with the systems of local coordinates $\{x_{i,j}\}$.

Suppose we have constructed a {\cpt} $Y$ of $U$ and its systems of local coordinates
\begin{gather}\label{eq:Y_LocCoor}
\{ y_1,\dots, y_l, y_{l+1}, \dots, y_{l+m},	y_{l+m+1}, \dots, y_{l+m+r} \},
\end{gather}
together with a birational map $\pi\colon Y \to X_1\times X_2$ such that
\begin{gather}\label{eq:Y_LocBeh}
\pi^*f_1 = \frac{1}{y_1^{a_1}\cdots y_l^{a_l}}, \qquad \pi^*f_2 = \frac{1}{y_1^{b_1}\cdots y_l^{b_l}}, \qquad Y\setminus U = (y_1\cdots y_{l+m})
\end{gather}
for some $a_i,b_i \geq 0$ with $b_i >0$ if $a_i = 0$. Let $T$ be the boundary divisor $Y\setminus U$. For a pair of irreducible components $D_1$, $D_2$ of $T$, set
\begin{gather*} \Delta_Y(D_1,D_2) = \begin{cases}
\displaystyle \prod_{i=1}^2\big(\ord_{D_i}(\pi^*f_1)-\ord_{D_i}(\pi^*f_2)\big), 	& \text{if $D_1\neq D_2, D_1\cap D_2 \neq \varnothing$}, \\
0, & \text{otherwise}. \end{cases}\end{gather*}

Notice that if $\Delta_Y(D_1,D_2) \geq 0$ for any pair $(D_1, D_2)$, i.e., $a\geq b$ or $b\geq a$ in terms of the systems of local coordinates as above, then the zero divisor of $\pi^*f$ is smooth in a neighborhood of $T$
and intersects $T$ transversally. That is, $Y$ is a non-degenerate {\cpt} of $(U,f)$ in this case.

Otherwise, pick a pair $(D_1,D_2)$ such that $\Delta_Y(D_1,D_2) <0$ and is the smallest among all possible values of $\Delta_Y$. Let $\widetilde{Y}$ be the blowup of $Y$ along $D_1\cap D_2$ and $\tilde{\pi}\colon \widetilde{Y} \to X_1\times X_2$ the induced map. Then $\widetilde{T}:= \widetilde{Y}\setminus U$ consists of the exceptional divisor $E$ and $\{\widetilde{D}\}$ where $\widetilde{D}$ denotes the proper transform of an irreducible component $D$ of $T$. To construct the explicit systems of local coordinates of $\widetilde{Y}$, we may assume that $D_i = (y_i)$ for $i=1,2$ with $a_1 > b_1$ and $a_2 < b_2$ after rearrangement. The blowup is defined by the equation $y_1v=y_2u$ where $[u:v]$ is the homogeneous coordinate of $\PP^1$. Over this chart of coordinates of $Y$, we add two charts to $\widetilde{Y}$ (and away from the blowup center, we pass the charts of $Y$ to $\widetilde{Y}$). In the chart $v\neq 0$ of~$\PP^1$, we consider the local coordinates
\begin{gather}\label{eq:Yv_LocCoor}
\left\{ \bar{u}:= \frac{u}{v}, y_2, y_3, \dots, y_{l+m+r} \right\}.
\end{gather}
Then
\begin{gather}\label{eq:Yv_LocBeh}
\tilde{\pi}^*f_1 = \frac{1}{\bar{u}^{a_1}y_2^{a_1+a_2}	y_3^{a_3}\cdots y_l^{a_l}},\qquad \tilde{\pi}^*f_2 = \frac{1}{\bar{u}^{b_1}y_2^{b_1+b_2}	y_3^{b_3}\cdots y_l^{b_l}},
\\\widetilde{T} = (\bar{u}y_2\cdots y_{l+m}),\qquad \widetilde{D}_1 = (\bar{u}), \qquad E = (y_2).\nonumber \end{gather}
In the chart $u\neq 0$ of $\PP^1$, we consider the local coordinates
\begin{gather}\label{eq:Yu_LocCoor}
\left\{ \bar{v}:= \frac{v}{u}, y_1, y_3, \dots, y_{l+m+r} \right\}.
\end{gather}
Then
\begin{gather}\label{eq:Yu_LocBeh}
\tilde{\pi}^*f_1 = \frac{1}{\bar{v}^{a_2}y_1^{a_1+a_2}	y_3^{a_3}\cdots y_l^{a_l}},\qquad \tilde{\pi}^*f_2 = \frac{1}{\bar{v}^{b_2}y_1^{b_1+b_2}	y_3^{b_3}\cdots y_l^{b_l}},
\\ \widetilde{T} = (\bar{v}y_1y_3\cdots y_{l+m}),\qquad \widetilde{D}_2 = (\bar{v}), \qquad E = (y_1). \nonumber\end{gather}
One has $\widetilde{D}_1\cap\widetilde{D}_2 = \varnothing$ and
\begin{gather*}
\Delta_{\widetilde{Y}}(\widetilde{D},\widetilde{D}') = \Delta_Y(D,D') \qquad \text{for $(D,D') \neq (D_1,D_2), (D_2,D_1)$}, \\
\Delta_{\widetilde{Y}}(\widetilde{D}_i,E) = \Delta_Y(D_1,D_2) + \big(\ord_{D_i}(\pi^*f_1)-\ord_{D_i}(\pi^*f_2)\big)^2	> \Delta_Y(D_1,D_2), \\
\Delta_{\widetilde{Y}}(\widetilde{D},E)= \Delta_Y(D,D_1) + \Delta_Y(D,D_2).
\end{gather*}
Notice that
\begin{gather*} \Delta_Y(D,D_i) = \left(\ord_D(\pi^*f_1) -\ord_D(\pi^*f_2)\right)\cdot (-a_i+b_i) \end{gather*}
and consequently
\begin{gather*} \Delta_{\widetilde{Y}}(\widetilde{D},E) \ \begin{cases}
> \min\{\Delta_Y(D,D_1),\Delta_Y(D,D_2)\} & \text{if $\ord_D(\pi^*f_1) \neq \ord_D(\pi^*f_2)$}, \\
= 0 & \text{if $\ord_D(\pi^*f_1) = \ord_D(\pi^*f_2)$}.
\end{cases} \end{gather*}
We then replace $Y$ with its systems of local coordinates by $\widetilde{Y}$ with the coordinates constructed above.

Observe that after a finite number of blowups in this procedure, the smallest possible value of the function $\Delta$, if it is negative on $Y$, strictly increases. Hence repeating this construction, it produces a~non-degenerate {\cpt} $X$ of $(U,f)$ obtained by a sequence $X \to\cdots\to X_1\times X_2$ of explicit blowups.

\subsection{Relations between complexes}\label{sect:relations}
We shall put a filtered complex on each step of the sequence of blowups constructed in Section~\ref{sect:elim} and compare them under push-forwards.

Consider a birational map $\pi\colon Y \to X_1\times X_2$ and let $P_{Y,f_j}$ be the pole divisor of the pullback of $f_j$ on $Y$. For any $\lambda\in\QQ$, consider the subsheaf $\sO_Y^{(\lambda)}$ of $\sO_Y(*(P_{Y,f_1}+P_{Y,f_2}))$ given by
\begin{gather*} \sO_Y^{(\lambda)} = \sum_{0\leq\theta\leq1} \sO_Y\left(\flr{\lambda((1-\theta) P_{Y,f_1} + \theta P_{Y,f_2})}\right), \end{gather*}
which is coherent but not locally free in general. Let $T = Y\setminus \pi^{-1}(U)$ and
\begin{gather*} \Omega_Y^{p,(\lambda)} = \sO_Y^{(\lambda)} \otimes_{\sO_Y} \Omega_Y^p(\log T). \end{gather*}
For $\ddRH \in \{\de +\de f, \de f\}$, consider the complex on $Y$
\begin{gather*} F_Y^{(\lambda)}(\Theta) = \big[ \sO_Y^{(-\lambda)_+} \xrightarrow{\ddRH} \Omega_Y^{1,(1-\lambda)_+}	\xrightarrow{\ddRH} \Omega_Y^{2,(2-\lambda)_+}	\xrightarrow{\ddRH} \cdots \big], \end{gather*}
where
\begin{gather*} \Omega_Y^{p,(p-\lambda)_+} = \begin{cases}
\Omega_Y^{p,(p-\lambda)}, & \text{if $p-\lambda \geq 0$}, \\
0 ,& \text{otherwise}. \end{cases} \end{gather*}
(It is indeed a sub-complex of $(\Omega_Y^\bullet(*T),\ddRH)$.)

If $Y$ is a non-degenerate {\cpt} of $(U,f)$, e.g., $Y$ equals the iterated blowup $X$ constructed in the previous subsection, then $F_Y^{(\lambda)}(\ddRH)$ is indeed the filtration defining the desired Hodge filtration.
The following two lemmas describe the situations in the initial step $Y = X_1\times X_2$ and in each blowup $\varpi\colon \widetilde{Y}\to Y$ appeared in the sequence occurred in Section~\ref{sect:elim}, respectively.

\begin{lemma*} Consider $Y = X_1\times X_2$ and let $F_\boxtimes^\lambda(\ddRH)$ be the product filtration of $F_{X_i}^\lambda(\ddRH_i)$ whose $p$-th component is
\begin{gather*} F_\boxtimes^\lambda(\ddRH)^p := \sum_{a\in\QQ, q\in\mathbb{Z}} 	F_{X_1}^a(\ddRH_1)^q \boxtimes F_{X_2}^{\lambda-a}(\ddRH_2)^{p-q} \end{gather*}
inside $\Omega_Y^p(*T)$. $($Here $\ddRH_i = \de+\de f_i$ and $\de f_i$ if $\ddRH = \de+\de f$ and $\de f$, respectively.$)$
\begin{enumerate}\itemsep=0pt
\item The filtration $F_Y^{(\lambda)}(\ddRH)$ coincides with $F_\boxtimes^\lambda(\ddRH)$.
\item For all $\lambda \geq \rho$, the induced map
\begin{gather*} \HH^k\big(Y,F_\boxtimes^\lambda(\ddRH)\big) \to \HH^k\big(Y,F_\boxtimes^\rho(\ddRH)\big) \end{gather*}
is injective; it is an isomorphism if $\lambda\leq 0$. One has
\begin{gather}\label{eq:Kunneth-Y1}
\HH^k\big(Y, F_\boxtimes^\lambda(\ddRH)\big) = \bigoplus_{i+j=k}\left(\sum_{a +b = \lambda} \HH^i\big(X_1,F_{X_1}^a(\ddRH_1)\big)\otimes\HH^j\big(X_2,F^b_{X_2}(\ddRH_2)\big) \right)
\end{gather}
for any $\lambda$ where the inner sum is taken inside the vector space $\coH^i(U_1,\ddRH_1)\otimes \coH^j(U_2,\ddRH_2)$.
\end{enumerate}
\end{lemma*}

\begin{proof}(i) Indeed the inclusion $F_\boxtimes^\lambda(\ddRH) \subset F_Y^{(\lambda)}(\ddRH)$ is clear. Conversely suppose that $\mu := p-\lambda$ $\geq 0$. The $p$-th degree component $\Omega_Y^{p,(\mu)}$ of $F_Y^{(\lambda)}(\ddRH)$ is generated by products of elements in
\begin{gather*} \Omega_{X_1}^q(\log S_1)(\flr{(1-\theta)\mu P_{X_1}})\qquad\text{and}\qquad \Omega_{X_2}^{p-q}(\log S_2)(\flr{\theta\mu P_{X_2}}), \end{gather*}
which are the $q$-th and the $(p-q)$-th components of $F_{X_1}^{q-(1-\theta)\mu}(\ddRH_1)$ and $F_{X_2}^{p-q-\theta\mu}(\ddRH_2)$, respectively. The latter two contribute to $F_\boxtimes^\lambda(\ddRH)$.

(ii) We have the natural external products
\begin{gather*} F_{X_1}^a(\ddRH_1)\boxtimes F_{X_2}^b(\ddRH_2)	\to F_\boxtimes^\lambda(\ddRH) \end{gather*}
for all $a+b = \lambda$. Using \v{C}ech resolution or representatives in smooth forms, one obtains the cup product
\begin{gather*} \HH^i\big(X_1,F_{X_1}^a(\ddRH_1)\big)\otimes	\HH^j\big(X_2,F^b_{X_2}(\ddRH_2)\big)\to \HH^{i+j}\big(Y,	F_{X_1}^a(\ddRH_1)\boxtimes F_{X_2}^b(\ddRH_2)\big). \end{gather*}
On the other hand, one has from the definition that
\begin{gather*} \Gr_{F_\boxtimes(\ddRH)}^\lambda = \bigoplus_{a+b=\lambda} \Gr_{F_{X_1}(\ddRH_1)}^a \boxtimes \Gr_{F_{X_2}(\ddRH_2)}^b. \end{gather*}
Again there is the cup product
\begin{gather}\label{eq:cup_gr}
\bigoplus_{i+j=k} \HH^i\big(X_1,\Gr_{F_{X_1}(\ddRH_1)}^a\big)\otimes \HH^j\big(X_2,\Gr_{F_{X_2}(\ddRH_2)}^b\big)\to \HH^k\big(Y, \Gr_{F_{X_1}(\ddRH_1)}^a \boxtimes \Gr_{F_{X_2}(\ddRH_2)}^b \big).
\end{gather}
To complete the assertions, it suffices to show that the arrow above is an isomorphism. Indeed, denote the sum inside the big round brackets in the right side of \eqref{eq:Kunneth-Y1} by $\Phi(i,j,\lambda)$. The $E_1$-degeneration of the spectral sequence attached to each filtered complex $F_{X_i}(\ddRH_i)$ implies that there is the natural exact sequence
\begin{gather*} 0 \to \bigcup_{\mu>\lambda} \Phi(i,j,\mu) \to \Phi(i,j,\lambda) \to\bigoplus_{a+b=\lambda} \HH^i\big(X_1,\Gr_{F_{X_1}(\ddRH_1)}^a\big) \otimes \HH^j\big(X_2,\Gr_{F_{X_2}(\ddRH_2)}^b\big) \to 0. \end{gather*}
Together with the isomorphism of \eqref{eq:cup_gr}, a decreasing induction on the index $\lambda$ in \eqref{eq:Kunneth-Y1} then gives the desired statements. Finally the isomorphism of \eqref{eq:cup_gr} can be obtained by directly truncating the involved complexes and inductively using the K\"unneth formula for coherent sheaves \cite[Theorem~1]{SW}.
\end{proof}

\begin{lemma*} Let $\varpi\colon \widetilde{Y} \to Y$ be the blowup constructed in Section~{\rm \ref{sect:elim}}. Then the pullback $\varpi^*F_Y^{(\lambda)}(\ddRH)$ is a sub-complex of $F_{\widetilde{Y}}^{(\lambda)}(\ddRH)$
and the following hold.
\begin{enumerate}\itemsep=0pt
\item Each component $F_{\widetilde{Y}}^{(\lambda)}(\ddRH)^p/\varpi^*F_Y^{(\lambda)}(\ddRH)^p$ of the quotient is supported on the exceptional divisor $E$ and is a direct sum of copies of relative $\sO(-1)$ of the $\PP^1$-bundle $E$ over the center of the blowup $\varpi$.
\item We have the canonical quasi-isomorphism $F_Y^{(\lambda)}(\ddRH) \to \RD\varpi_*F_{\widetilde{Y}}^{(\lambda)}(\ddRH)$.
\end{enumerate}
\end{lemma*}

\begin{proof}(i) Let $\widetilde{T} = \widetilde{Y}\setminus U$. As in the proof of the previous lemma, assume that $\mu:= p-\lambda \geq 0$. Since $\varpi^*\Omega_Y^p(\log T) = \Omega_{\widetilde{Y}}^p(\log \widetilde{T})$,
we only need to consider the difference between $\sO_{\widetilde{Y}}^{(\mu)}$ and $\varpi^*\sO_Y^{(\mu)}$. We use the local coordinates~\eqref{eq:Y_LocCoor} with the properties~\eqref{eq:Y_LocBeh}. Then $\sO_Y^{(\mu)}$ is generated by various
\begin{gather}\label{eq:g_Gen}g := \frac{1}{y^{\flr{c}}},\end{gather}
where $c = (1-\theta)\mu a + \theta\mu b$ for $0\leq\theta\leq 1$. On the other hand, $\sO_{\widetilde{Y}}^{(\mu)}$ is generated by various
\begin{gather*} h_{\bar{u}} := \frac{1}{\bar{u}^{\flr{c_1}}y_2^{\flr{c_1+c_2}}y_3^{\flr{c_3}}\cdots y_l^{\flr{c_l}}} \qquad\text{and}\qquad h_{\bar{v}} :=	\frac{1}{\bar{v}^{\flr{c_2}}y_1^{\flr{c_1+c_2}}y_3^{\flr{c_3}}
	\cdots y_l^{\flr{c_l}}} \end{gather*}
on the two charts \eqref{eq:Yv_LocCoor} and \eqref{eq:Yu_LocCoor} satisfying \eqref{eq:Yv_LocBeh} and \eqref{eq:Yu_LocBeh}, respectively. We have
\begin{gather*} \gen{h_{\bar{u}}}_{\sO_{\widetilde{Y}}}/\varpi^*\gen{g}_{\sO_Y} = \begin{cases}
\gen{h_{\bar{u}}}_{\sO_E}, & 	\text{if $\flr{c_1+c_2} = \flr{c_1} + \flr{c_2} + 1$}, \\
0, & \text{if $\flr{c_1+c_2} = \flr{c_1} + \flr{c_2}$},
\end{cases} \end{gather*}
and similarly on the chart $u\neq 0$. Observe on the intersection that $h_{\bar{u}} = \bar{v}^{-1}h_{\bar{v}}$ if $\flr{c_1+c_2} \neq \flr{c_1} + \flr{c_2}$. Thus $\sO_{\widetilde{Y}}^{(\mu)}/\varpi^*\sO_{Y}^{(\mu)}$
is a direct sum of copies of the relative $\sO(-1)$.

(ii) There are only finitely many $y^{-\flr{c}}$ occurred in the generators in~\eqref{eq:g_Gen}; call the appeared (distinct) monomials $\xi_1,\dots,\xi_k$ ordered by the increment of the corresponding parameter $\theta$.
Consider locally the filtration $M_i = \gen{\xi_1,\dots,\xi_i}$ of $\sO_Y$-submodules of $\sO_Y^{(\mu)}$. On this chart one has the short exact sequence
\begin{align*}
0 \to N_{i+1} \to M_i\oplus \sO_Y\cdot\xi_{i+1} & \to M_{i+1} \to 0, \\
(\omega, \eta) & \mapsto \omega - \eta.
\end{align*}
Here if $\xi_i = \prod y_j^{-c_{i,j}}$, then $c_{i,j}$ is monotone as a function of $i$ and $N_{i+1} = M_i\cap \sO_Y\xi_{i+1}$ is the invertible sheaf generated by
\begin{gather*} \prod y_j^{-\min\{c_{i,j}, c_{i+1,j} \}} \end{gather*}
(with diagonal embedding to $M_i\oplus \sO_Y\xi_{i+1}$). Applying inductively the projection formula (e.g., \cite[Exercise~III.8.3]{H}) to the locally free $\sO_Y\xi_{i+1}$ and $N_i$, one obtains that
\begin{gather*} \RD\varpi_*\varpi^*M_i= \RD\varpi_*\big(\sO_{\widetilde{Y}}\otimes\varpi^*M_i\big)= \RD\varpi_*\sO_{\widetilde{Y}}\otimes M_i = M_i, \end{gather*}
since $\RD\varpi_*\sO_{\widetilde{Y}} = \sO_Y$ for the birational morphism $\varpi$. Together with the computation in~(i), the assertion follows.
\end{proof}

The proof of the main Theorem~\ref{Thm:Kunneth} is now completed by the above two lemmas.

\section{The Brieskorn lattice}\label{sect:BL}

We indicate that the K\"unneth formula for $\coH_\dR^k(U,f)$ and $\coH_\Hig^k(U,f)$ can be put together into a~family version (cf.~\cite[Sections~1.3 ans~6.2]{SY}, \cite[Section~3.2]{KKP} and \cite{M_Kont}).

Consider the affine line $\AA^1_u$ with a fixed coordinate $u$.

Fix a pair $(U,f)$ and a non-degenerate {\cpt} $X$. Let $\pi\colon X\times \AA^1_u \to \AA^1_u$ be the projection. The \textit{$k$-th Brieskorn lattice of $(U,f)$} (cf.~\cite[Section~6.1]{SY}) is the coherent sheaf on $\AA^1_u$
\begin{gather*} \sG^k(U,f) := \RD^k\pi_*\big(\Omega_f^q(\alpha)\boxtimes\sO_{\AA^1_u},	u\cdot\de_X +\de f \big)_{q\geq 0}. \end{gather*}
Here $\de_X$ is the derivative with respective to the component $X$ only.

Let
\begin{gather*} F^{-\alpha+p}\sG^k(U,f) = \RD^k\pi_* \big(\Omega_f^q(\alpha)\boxtimes\sO_{\AA^1_u}, u\cdot\de_X +\de f \big)_{q\geq p}, \qquad 0\leq \alpha <1. \end{gather*}
According to the quasi-isomorphisms in Proposition~\ref{Prop:Kont-Log}(ii) and the $E_1$-degeneration
\begin{gather*} \dim \coH^k(U,\ddRH) = 	\sum_q \dim \coH^{k-q}\big(X,\Omega_f^q(\alpha)\big), \qquad \ddRH \in \{\de +\de f, \de f \}, \end{gather*}
we have that
\begin{itemize}\itemsep=0pt
\item the $\sO_{\AA^1_u}$-module $\sG^k(U,f)$ is free and independent of the choice of $\alpha$, and
\item the canonical maps $F^{-\alpha+p}\sG^k(U,f) \to \sG^k(U,f)$ indeed define a filtration by free subsheaves of $\sG^k(U,f)$ with free quotients whose fibers are
\begin{gather}\label{eq:Kont-fiber}
F^{-\alpha+p}\sG^k(U,f)\rest{u=c} = \begin{cases}
F^{-\alpha+p}\coH_\Hig^k(U,f), & c=0, \\
F^{-\alpha+p}\coH_\dR^k(U,f/c), & c\neq 0
\end{cases}
\end{gather}
under the base-change map (cf.\ the arguments in the proof of \cite[Proposition~1.5.1]{ESY}).
\end{itemize}

Now consider as in Section~\ref{sect:Resolution} two pairs $(U_i, f_i)$ and their product $(U,f)$. We again have the natural map
\begin{gather*} \bigoplus_{0\leq i\leq k} \sG^i(U_1,f_1) \otimes_{\sO_{\AA^1_u}} \sG^{k-i}(U_2,f_2) \to \sG^k(U,f) \end{gather*}
obtained by cup product. Similar to the proof of \cite[Proposition~1.5.1]{ESY}, the fiber-wise K\"unneth formula Theorem~\ref{Thm:Kunneth} shows that the above map is an isomorphism strictly compatible with the filtrations.

On the other hand, fix $0\leq \alpha <1$. One can naturally complete $\big(\sG^k(U,f), F^{-\alpha +p}\big)_{p \in\ZZ}$ into a~filtered bundle on $\PP^1$ by adding the filtered cohomology space
\begin{gather*} F^p\coH_{\de,\alpha}^k(U,f) :=\mathrm{Image}\big\{\HH^k\big(X, (\Omega_f^\bullet(\alpha),\de)_{\bullet \geq p} \big)\to \HH^k\big(X, (\Omega_f^\bullet(\alpha),\de)\big) \big\} \end{gather*}
as the fiber over $u = \infty$. The resulting sheaf on $\PP^1$ is called the \textit{Kontsevich bundle} and denoted by $\sK_\alpha^k(U,f)$. The filtered space $\coH_{\de,\alpha}^k(U,f)$ depends on $\alpha$ but does not depend on the choice of the non-degenerate {\cpt} $X$ since in fact (\cite[Theorem~1.11(a), Section~1.3]{SY}, \cite[Theorem~1.2(i,ii)]{M_Kont})
\begin{enumerate} \itemsep=0pt
\item the bundle $\sK_\alpha^k(U,f)$ can be obtained (as a Deligne extension) by using a natural algebraic connection on $\sG^k(U,f)$ (see \cite[Lemma 6.2]{SY} with the aid of Proposition~\ref{Prop:Kont-Log}(ii) in the case of non-degenerate {\cpt} $X$) with regular singularity at $u=\infty$, and
\item under the base-change, one has
\begin{gather}\label{eq:d_HN}
\big(\coH_{\de,\alpha}^k(U,f), F^\bullet\big) = \big(\sK_\alpha^k(U,f), \HN^\bullet\big)\rest{u=\infty},
\end{gather}
where $\HN^p\sK^k_\alpha(U,f)$ is the Harder--Narasimhan filtration on the locally free sheaf $\sK^k_\alpha(U,f)$ normalized with $\Gr_{\HN}^p$ isomorphic to a direct sum of copies of $\sO(p)$ on $\PP^1$.
\end{enumerate}
In fact, we also have
\begin{gather}\label{eq:ddRH_HN}
F^{-\alpha+\bullet}\sG^k(U,f) =\big(\sK_\alpha^k(U,f), \HN^\bullet\big)\rest{u\neq\infty}.
\end{gather}

However, one does not have the direct analogue of the K\"unneth formula for $\coH_{\de,\alpha}^\bullet$ in general. For example, in the case $(U_i,f_i) = (\AA^1,x_i^2)$ where $x_i$ is a global coordinate on the affine line~$\AA^1$, one has
\begin{gather*} \dim \coH_{\de,0}^k\big(\AA^1,x_i^2\big) = \begin{cases}
1, & \text{if $k = 1$}, \\
0, & \text{otherwise},
\end{cases} \end{gather*}
and the cup product
\begin{gather}\label{eq:cup_zero}
\coH_{\de,0}^1\big(\AA^1,x_1^2\big) \otimes \coH_{\de,0}^1\big(\AA^1,x_2^2\big) \to \coH_{\de,0}^2\big(\AA^2,x_1^2+x_2^2\big)
\end{gather}
is zero. The vanishing can be obtained by a careful cohomology calculation or manipulating Kontsevich bundles as follows.

Indeed, consider the complex $\big[\Omega_{f_i}^0 \xrightarrow{\de} \Omega_{f_i}^1\big]$ on $\PP^1$, whose hypercohomology gives $\coH_{\de,0}^k\big(\AA^1,x_i^2\big)$. Let $\pi\colon \widetilde{X} \to \PP^1\times\PP^1$ be the blowup at $(\infty,\infty)$. Then $\widetilde{X}$ is a non-degenerate {\cpt} of $\big(\AA^2, f=f_1+f_2\big)$. On $\widetilde{X}$ we have the inclusion
\begin{gather*} A:= \pi^*\big(\boxtimes_{i=1}^2 (\Omega_{f_i}^\bullet,\de) \big) \to B:= (\Omega_f^\bullet, \de) \end{gather*}
and the arrow \eqref{eq:cup_zero} factors through the induced map $\HH^2\big(\widetilde{X}, A\big) \to \HH^2\big(\widetilde{X}, B\big)$. One checks that the last map is zero.

On the other hand, for $\ddRH = \de + \de x_i^2$ or $\de x_i^2$, we have
\begin{gather*} \coH^1\big(\AA^1, \ddRH\big) = \Gr_F^{1/2} \coH^1\big(\AA^1, \ddRH\big) = \coH^0\big(\PP^1, \Omega^1(2[\infty])\big), \end{gather*}
which is generated by the class $\de x_i$. By \eqref{eq:Kont-fiber}, \eqref{eq:d_HN}, \eqref{eq:ddRH_HN}, one obtains that
\begin{gather*} \sK^1_\alpha\big(\AA^1,x_i^2\big) = \begin{cases}
\sO_{\PP^1}\cdot\de x_i, & 0\leq \alpha < \frac{1}{2}, \\
\sO_{\PP^1}([\infty])\cdot\de x_i, & \frac{1}{2}\leq \alpha <1,
\end{cases} \end{gather*}
and in particular, $\coH_{\de,0}^1\big(\AA^1,x_i^2\big)$ is generated by $\de x_i$ as a fiber of $\sK^1_0\big(\AA^1,x_i^2\big)$. Similar argument shows that
\begin{gather*} \sK^2_\alpha\big(\AA^2,x_1^2+x_2^2\big) = \sO_{\PP^1}([\infty])\cdot\de x_1\de x_2, \qquad 0\leq \alpha <1. \end{gather*}
We conclude that the map \eqref{eq:cup_zero} sends the generator $\de x_1\otimes \de x_2$ to $\de x_1\de x_2$, which represents zero in $\coH_{\de,0}^2\big(\AA^2,x_1^2+x_2^2\big)$ as a fiber of $\sK^2_0\big(\AA^2,x_1^2+x_2^2\big)$.

\subsection*{Acknowledgements}

The second author thanks Noriko Yui for providing him the wonderful opportunity to work with her during the years 2006 to 2008 and the happy lunch time at Queen's University. We thank the referee for the careful reading,
helpful comments and in particular pointing out the insufficiency of the proof of Proposition~\ref{Prop:Kont-Log} in an earlier version. This work was partially supported by the MoST and the NCTS, Taiwan.

\pdfbookmark[1]{References}{ref}
\LastPageEnding

\end{document}